\numberwithin{equation}{section}
\def\PP{\mathbb{P}}
\def\RR{\mathbb{R}}
\def\NN{\mathbb{N}}
\def\EE{\mathbb{E}}
\def\11{\mathbbm{1}}
\def\E{\mathbb{E}}
\def\P{\mathbb{P}}
\def\N{\mathbb{N}}
\def\d{\partial}
\def\Z{\mathbb{Z}}
\def\ZZ{\mathbb{Z}}
\newtheorem{thm}{Theorem}[section]
\newtheorem{prop}[thm]{Proposition}
\newtheorem{hyp}{Assumption}
\theoremstyle{remark}
\newtheorem{rem}{Remark}
\begin{document}

\title{Quasi-stationary distribution for multi-dimensional birth and death processes conditioned to survival of all coordinates}

\author{Nicolas Champagnat$^{1,2,3}$, Denis Villemonais$^{1,2,3}$}

\footnotetext[1]{Universit\'e de Lorraine, IECL, UMR 7502, Campus Scientifique, B.P. 70239,
  Vand{\oe}uvre-l\`es-Nancy Cedex, F-54506, France}
\footnotetext[2]{CNRS, IECL, UMR 7502,
  Vand{\oe}uvre-l\`es-Nancy, F-54506, France} \footnotetext[3]{Inria, TOSCA team,
  Villers-l\`es-Nancy, F-54600, France.\\
  E-mail: Nicolas.Champagnat@inria.fr, Denis.Villemonais@univ-lorraine.fr}

\maketitle

\begin{abstract}
  This article studies the quasi-stationary behaviour of multidimensional birth and death processes, modeling the interaction between
  several species, absorbed when one of the coordinates hits 0. We study models where the absorption rate is not uniformly bounded,
  contrary to most of the previous works. To handle this natural situation, we develop original Lyapunov function arguments that
  might apply in other situations with unbounded killing rates. We obtain the exponential convergence in total variation of the
  conditional distributions to a unique stationary distribution, uniformly with respect to the initial distribution. Our results
  cover general birth and death models with stronger intra-specific than inter-specific competition, and cases with neutral
  competition with explicit conditions on the dimension of the process.
\end{abstract}

\noindent\textit{Keywords:} {multidimensional birth and death process; process absorbed on the boundary; quasi-stationary distribution;
$Q$-process; uniform exponential mixing property; Lyapunov function; strong intra-specific competition; neutral competition.}

\medskip\noindent\textit{2010 Mathematics Subject Classification.} Primary: {60J27; 37A25; 60B10}. Secondary: {92D25; 92D40}.

\section{Introduction}
\label{sec:intro}


This article is devoted to the study of quasi-stationary behavior of multi-type birth and death processes absorbed when one of the
types goes extinct. 

More specifically, we consider a continuous-time Markov process $(X_t,t\geq 0)$ taking values in $\ZZ_+^r$ for some $r\geq 1$, where
we use the notations $\ZZ_+=\{0,1,2,\ldots\}$ and $\NN=\{1,2,3,\ldots\}$. The transition rates of $X$ are given by
$$
\text{from }n=(n_1,\ldots,n_r)\text{ to }
\begin{cases}
  n+e_j & \text{with rate }n_j b_j(n),\\ 
  n-e_j & \text{with rate }n_j\left[d_j(n)+\Big(\sum_{k=1}^r c_{jk}(n) n_k\Big)^\gamma\right]
\end{cases}
$$
for all $1\leq j\leq r$, with $e_j=(0,\ldots,0,1,0,\ldots,0)$, where the nonzero coordinate is the $j$-th one, $\gamma>0$,
$b(n)=(b_1(n),\ldots,b_r(n))$ and $d(n)=(d_1(n),\ldots,d_r(n))$ are functions from $\ZZ_+^r$ to $\RR_+^r$ and
$c(n)=(c_{ij}(n))_{1\leq i,j\leq r}$ is a function from $\ZZ_+^r$ to the set of $r\times r$ matrices with nonnegative coefficients.

In other words, the infinitesimal generator of the process $X$ is defined for all bounded function $f$ on $\ZZ^r_+$ and all $n\in
\ZZ_+^r$ as
\begin{multline}
  Lf(n)=\sum_{j=1}^r[f(n+e_j)-f(n)]n_j b_j(n) \\ +\sum_{j=1}^r[f(n-e_j)-f(n)]n_j\left[d_j(n)+\left(\sum_{k=1}^r c_{jk}(n) n_k\right)^\gamma\right]
  \label{eq:generator}
\end{multline}

This model represents a density-dependent population dynamics with $r$ types of individuals (say $r$ species), where $b_j(n)$ (resp.\
$d_j(n)$\,) is the individual birth (resp.\ death) rate of an individual of type $j$ in the population $n$, and $c_{ij}(n)$
represents the competition exerted by an individual of type $j$ on an individual of type $i$ in the population $n$. The global
competition $\sum_{k=1}^r c_{jk}(n) n_k$ felt by an individual of type $i$ influences its death rate at a power $\gamma$, which
represents the strength of the competition. The larger $\gamma$ is, the stronger is the influence of the competition in large
populations. The case $\gamma=1$ is very common in biology and is known as logistic (or competitive Lotka-Volterra) competition.
Other values of $\gamma>0$ are also relevant in ecological applications.

Note that the forms of the birth and death rates imply that, once a coordinate $X^j_t$ of the process hits 0, it remains equal to 0.
This corresponds to the extinction of the population of type $j$. Hence, the set $\d:=\ZZ_+^r\setminus\NN^r$ is absorbing for the
process $X$. Let us denote by $\tau_\d$ its absorption time. Our goal is to study the process $(X_t,t\geq 0)$ conditioned to
non-aborption, and in particular its quasi-stationary distribution, i.e.\ a probability measure $\alpha$ on $\NN^r$ such that
$$
\PP_\alpha(X_t\in\cdot\mid t<\tau_\d)=\alpha,\quad\forall t\geq 0.
$$
More precisely, we shall give conditions ensuring the uniform exponential convergence of conditional distributions to the
quasi-stationary distribution, independently of the initial condition. This means that there exist constants $C,\lambda>0$ such
that
\begin{equation}
  \label{eq:conv}
  \left\|\PP_\mu(X_t\in\cdot\mid t<\tau_\d)-\alpha\right\|_{TV}\leq C e^{-\lambda t},\quad\forall \mu\in\mathcal{P}(\NN^r),\quad t\geq 0,
\end{equation}
where $\|\cdot\|_{TV}$ is the total variation norm and $\mathcal{P}(\NN^r)$ is the set of probability measures on $\NN^r$. This
implies in particular the uniqueness of the quasi-stationary distribution.

In particular, when~\eqref{eq:conv} is satisfied, regardless of the initial condition, the quasi-stationary distribution describes
the state of the population when it survives for a long time. One of the most notable features of quasi-stationary populations is the
existence of a so-called \emph{mortality/extinction plateau}: there exists $\lambda_0>0$ limit of the extinction rate of the
population (see~\cite{meleard-villemonais-12}). The constant $-\lambda_0$ is actually the largest non-trivial eigenvalue of the
generator $L$ and satisfies
$$
\PP_\alpha(t<\tau_\d)=e^{-\lambda_0 t},\quad\forall t\geq 0.
$$

Many properties can be deduced from~\eqref{eq:conv}, as the uniform convergence of $e^{\lambda_0 t}\PP_x(t<\tau_\d)$ to $\eta(x)$,
where $\eta$ is the eigenfunction of $L$ corresponding to the eigenvalue $\lambda_0$~\cite[Prop.\,2.3]{ChampagnatVillemonais2014},
and the existence and the exponential ergodicity of the associated $Q$-process, defined as the process $X$ conditionned to never be
extinct (see~\cite[Thm.\,3.1]{ChampagnatVillemonais2014} for a precise definition).

Quasi-stationary distributions for population processes have received much interest in the recent years (see the
surveys~\cite{meleard-villemonais-12,vanDoorn2013}). One of the most understood case concerns birth and death processes on $\Z_+$
absorbed at $0$: it has been shown in~\cite{vanDoorn1991} that there are either zero, one or an infinite continuum of
quasi-stationary distributions for such processes. More recently, it has been shown in~\cite{Martinez-Martin-Villemonais2012} that
there exists a unique quasi-stationary distribution for one dimensional birth and death processes if and only if~\eqref{eq:conv}
holds. This result has been extended in the recent paper~\cite{ChampagnatVillemonais2014} to birth and death processes with
catastrophes (among other applications). The specific question of estimates on the speed of convergence to quasi-stationary
distributions for one dimensional birth and death processes has been studied in~\cite{diaconis-miclo-09,chazottes-al-15}. The
multi-dimensional situation, which takes into account the existence of several types of individuals in a population, is much less
understood, except in the branching case of multi-type Galton-Watson processes (see~\cite{Athreya1972,penisson-11}) and for specific
cooperative models with bounded absorption rate (see~\cite{ChampagnatVillemonais2014}). Another originality of the models studied in
the present paper is the fact that the absorption rate is not uniformly bounded. To handle this natural situation, we develop
original Lyapunov function arguments that might apply to other situations with unbounded killing rates. For results on the
quasi-stationary behaviour of continuous-time and continuous state space models, we refer
to~\cite{CCLMMS09,Littin2012,ChampagnatVillemonais2015} for the one dimensional case and
to~\cite{Pinsky1985,Gong1988,Cattiaux2008,KnoblochPartzsch2010, DelMoralVillemonais2015, ChampagnatCoulibalyVillemonais2015} for the
multi-dimensional situation. Infinite dimensional models have been studied in~\cite{Collet2011,ChampagnatVillemonais2014}. Several
papers studying the quasi-stationary behaviour of models applied to biology, chemistry, demography and finance are listed
in~\cite{Pollett}.

We are going to prove~\eqref{eq:conv} under two sets of assumptions. The first one (Section~\ref{sec:one}) considers stronger
intra-specific competition than inter-specific competition (i.e.\ $c_{ii}(n)$ larger than $c_{ij}(n)$ for large $|n|$ and for $i\neq
j$). We make no particular assumption on the dimension $r$ of the process and on the birth, death and competition functions. The
second one (Section~\ref{sec:two}) considers a case of equal inter- and intra-specific competition (neutral competition), which leads
to specific difficulties that we can solve for dimension $r\leq 3$ in the logistic case. The last section~\ref{sec:ext} is dedicated
to a few extensions of our methods to other models.

\section{General birth and death functions with strong intra-specific competition}
\label{sec:one}

The first case we study corresponds to the following assumptions, where $|n|$ denotes $n_1+\ldots+n_r$, for all $n=(n_1,\ldots,n_r)$.

\begin{hyp}
  \label{hyp1}  
  \begin{description}
  \item[\textmd{(H1)}] There exist constants $\bar b$, $\bar d$ and $\underline{c}$ in $(0,\infty)$ and $\beta_1\geq 0,\beta_2\in(-\infty,1)$ with
    $\beta_1+\gamma\beta_2<\gamma$ such that, for all $n\in\NN^r$ and $i\in\{1,\ldots,r\}$,
    $$
    0<b_i(n)\leq \bar b|n|^{\beta_1},\quad 0\leq d_i(n)\leq\bar d|n|^{\beta_1},\quad c_{ii}(n)\geq\underline{c}|n|^{-\beta_2}.
    $$
  \item[\textmd{(H2)}] For all $i\in\{1,\ldots,r\}$, when $|n|\rightarrow+\infty$,
    \begin{equation}
      \label{eq:hyp}
      c_{ii}(n)\gg\sum_{1\leq j\neq k\leq r} c_{jk}(n)+\frac{1}{|n|}\sum_{j=1}^r c_{jj}(n),
    \end{equation}
    where the notation $f(n)\gg g(n)$ means that $f(n)/g(n)\rightarrow+\infty$ when $|n|\rightarrow+\infty$.
  \end{description}
\end{hyp}

The first assumption is standard for population models, typically with $\beta_1=\beta_2=0$. From the biological point of view, the
second assumption corresponds to a stronger intra-specific competition than interspecific competition. For example it holds if all
the functions $c_{ii}(\cdot)$ for $1\leq i\leq r$ have the same order of magnitude when $|n|\rightarrow+\infty$ and all the
$c_{ij}(\cdot)$ for $i\neq j$ are asymptotically negligible w.r.t.\ the $c_{ii}(\cdot)$. The positivity of $b_i(n), c_{ii}(n)$ ensures
that the process is irreducible away from $\d$ (in the sense that $\P_n(X_1=m)>0$ for all $n,m\neq\d$).

\begin{thm}
  \label{thm:one}
  Under Assumptions~\ref{hyp1}, there exist constants $C,\lambda>0$ such that~\eqref{eq:conv} holds true.
\end{thm}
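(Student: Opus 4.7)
The plan is to deduce \eqref{eq:conv} from the abstract criterion of \cite{ChampagnatVillemonais2014}, which characterizes the uniform exponential convergence of conditional distributions via a Doeblin-type minorization on a ``small'' set together with a compatibility condition between non-absorption probabilities. Since the state space $\NN^r$ is unbounded and the absorption rate from states $n$ with $n_i=1$ and $|n|$ large is itself of order $|n|^{\gamma(1-\beta_2)}$, the standard version of that criterion does not apply directly: both the Doeblin set and the reference probability $\nu$ must be chosen inside some bounded rectangle $K_N:=\llbracket 1,N\rrbracket^r$, and a Lyapunov function argument is required to control the excursions outside $K_N$.

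The first step is to construct a Lyapunov function $\phi$ for the unconditioned dynamics. Taking $\phi(n):=(1+|n|)^p$ for a suitable $p>0$ (possibly with an exponential variant), I compute $L\phi(n)$ from \eqref{eq:generator}: by (H1) the birth part contributes at most $C|n|^{1+\beta_1}$, whereas the super-linear death part is bounded below by a quantity of order $|n|^{1+\gamma(1-\beta_2)}$, using $c_{ii}(n)\geq\underline c |n|^{-\beta_2}$ and applying Jensen's inequality to $\sum_i n_i^{\gamma+1}$. Because $\beta_1+\gamma\beta_2<\gamma$, the death term wins for $|n|$ large, and one obtains an estimate of the form
$$
  L\phi(n)\leq -\alpha\,\phi(n)+C\,\mathbf{1}_{K_N}(n)
$$
for $N$ large enough and suitable $\alpha,C>0$. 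A Gronwall-type argument applied to $t\mapsto\EE_n[\phi(X_{t\wedge\tau_\d})]$ then yields exponential moments for $|X|$ under the killed process, and in particular a quantitative bound of the form $\PP_n(X_t\notin K_N,\,t<\tau_\d)\leq Ce^{-\alpha t}\phi(n)$.

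On $K_N$ itself, the positivity of $b_i$ and $c_{ii}$ ensures irreducibility in both directions, so for any fixed $t_0>0$ there exist $c_1>0$ and a probability measure $\nu$ supported in $K_N$ such that
$$
  \PP_n(X_{t_0}\in\cdot\,;\,t_0<\tau_\d)\geq c_1\,\nu(\cdot),\qquad\forall n\in K_N,
$$
which is the Doeblin minorization. To check the compatibility condition $\PP_\nu(t<\tau_\d)\geq c_2\,\PP_n(t<\tau_\d)$ for all $n\in\NN^r$ and $t\geq 0$, I would combine the Lyapunov estimate with (H2): the strong intra-specific competition forces large single coordinates to decrease on a time scale of order $1$ without forcing the small coordinates to extinction, so the conditional dynamics effectively ``forgets'' the initial condition after a bounded time, at which point the process lies in $K_N$ with positive probability, and the minorization above takes over.

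The main obstacle is precisely the unbounded killing rate flagged in the abstract: since $\tau_\d^{-1}$ is not uniformly bounded, standard Champagnat--Villemonais arguments based on coupling non-absorbed trajectories break down. The key idea is to use the Lyapunov function simultaneously to control non-absorption probabilities from large initial states and to reduce matters to the compact set $K_N$, where both the Doeblin minorization and the comparison of survival probabilities become routine. Assumption (H2) is the ingredient that makes this reduction uniform: without it, an isolated small coordinate inside a large total population would be exposed to an unmanageable inter-specific killing rate, invalidating the comparison step.
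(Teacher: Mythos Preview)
Your plan correctly targets conditions (A1)--(A2) of \cite{ChampagnatVillemonais2014}, and the unconditional Lyapunov bound $L\phi\leq -\alpha\phi+C\mathbf 1_{K_N}$ with $\phi(n)=(1+|n|)^p$ is essentially what the paper uses for (A2) (there with the bounded variant $V_\varepsilon(n)=\sum_{j\leq|n|}j^{-1-\varepsilon}$, but the mechanism is the same: fast return to a compact set gives $\sup_n\EE_n[e^{\lambda(\tau_G\wedge\tau_\d)}]<\infty$, hence comparison of survival probabilities).

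The genuine gap is in your treatment of (A1). Your estimate $\PP_n(X_t\notin K_N,\,t<\tau_\d)\leq Ce^{-\alpha t}\phi(n)$ controls an \emph{unconditional} event; (A1) demands a uniform lower bound on the \emph{conditional} law $\PP_n(X_{t_0}\in\cdot\mid t_0<\tau_\d)$. Passing from one to the other requires dividing by $\PP_n(t_0<\tau_\d)$, and this is exactly where the unbounded absorption rate bites: from a state with $n_i=1$ and $|n|$ large, absorption occurs at rate of order $|n|^{\gamma(1-\beta_2)}$, so $\PP_n(t_0<\tau_\d)$ can be arbitrarily small and the $\phi(n)$ factor in your bound gives no compensation. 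Your heuristic reading of (H2) (``large coordinates decrease without forcing small ones to extinction'') does not supply the missing uniformity.

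The paper closes this gap by a Lyapunov argument \emph{for the conditional law itself}. With $V_\varepsilon$ bounded, Proposition~\ref{prop:mu_t} gives
\[
\mu_t(V_\varepsilon)-V_\varepsilon(n_0)\leq\int_0^t\bigl[\mu_s(LV_\varepsilon)-\mu_s(V_\varepsilon)\mu_s(L\mathbbm 1_{\NN^r})\bigr]\,ds,
\]
where the nonnegative correction $-\mu_s(V_\varepsilon)\mu_s(L\mathbbm 1_{\NN^r})$ encodes the conditioning. Assumption (H2) enters here in a precise computational way: it ensures that the killing rate from $\{n_i=1\}$, which involves $\bigl(\sum_{j\neq i}c_{ij}(n)n_j+c_{ii}(n)\bigr)^\gamma$, is dominated by the intra-specific drift term $c^*(n)^\gamma|n|^{\gamma-\varepsilon}$, so that the full integrand is bounded by $C-\tfrac1C\mu_s(|n|^{\gamma-\gamma\beta_2-\varepsilon})$. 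Since $V_\varepsilon$ is bounded (this is why $(1+|n|)^p$ would not work here), one obtains $\mu_s(K)\geq\tfrac12$ for some $s$ in a fixed time window, \emph{uniformly in $n_0$}, and (A1) follows by irreducibility. This conditional-Lyapunov step is the new ingredient the paper develops precisely to handle unbounded killing, and it is absent from your sketch.
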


We explain in Section~\ref{sec:ext} how to generalize this result to cases with killing or with multiple births.

\begin{rem}
  \label{rem:one}
  In fact, Assumption~(H2) and the statement about $c_{ii}(n)$ in Assumption~(H1) can be replaced by
  $$
  \sum_{j=1}^r \frac{n_j}{|n|}\mathbbm{1}_{n_j\neq 1}\left(\sum_{k=1}^r c_{jk}(n)n_k\right)^\gamma\geq C_r
  \sum_{j=1}^r \mathbbm{1}_{n_j=1}\left(\sum_{k=1}^r c_{jk}(n)n_k\right)^\gamma\gg|n|^{\beta_1\vee\gamma}
  $$
  when $|n|$ large enough, for some (explicit) constant $C_r$ depending only on $r$ and $\gamma$.

  This allows to cover other biological settings than the strong intra-specific competition. For example, assuming $\beta_1=0$, it is
  satisfied when, for all $i$, $c_{ij}(n)=n_i^{1+\delta}$ for at least some $j\neq i$ for some $\delta>0$, and $c_{ij}(n)=1$
  otherwise. This corresponds to a situation of strong competition exerted on other species by large species (a kind of collective
  aggressivity against other species). Hybrid situations can also fit this assumption, for example when $r=2$,
  $c_{12}(n)=n_1^{1+\delta}$, $c_{22}(n)=n_2^{\delta'}$ and $c_{11}(n)=c_{21}(n)=1$ for some $\delta,\delta'>0$. More complex
  structures of interaction can of course fit our assumptions.

  This also allows to cover classical biological cases, with comparable inter- and intra-specific competition, for example if
  $\gamma=1$ and $c_{ij}(n)=c_{ij}$ independent of $n$, provided that the $c_{ii}$ for $1\leq i\leq r$ are not too small compared to
  the $c_{ij}$, $i\neq j$.
\end{rem}

This result and the one of Section~\ref{sec:two} are consequences of the general criterion
of~\cite[Thm.\,2.1]{ChampagnatVillemonais2014}, which gives a necessary and sufficient condition for~\eqref{eq:conv} for general
Markov processes. This condition is given by the two properties (A1) and (A2) below: there exists a probability measure $\nu$ on
$\NN^r$ such that
\begin{itemize}
\item[(A1)] there exist $t_0,c_1>0$ such that for all $x\in\NN^r$,
  $$
  \PP_x(X_{t_0}\in\cdot\mid t_0<\tau_\d)\geq c_1\nu(\cdot);
  $$
\item[(A2)] there exists $c_2>0$ such that for all $x\in\NN^r$ and $t\geq 0$,
  $$
  \PP_\nu(t<\tau_\d)\geq c_2\PP_x(t<\tau_\d).
  $$
\end{itemize}

Hence we only have to prove that Assumptions~\ref{hyp1} implies~(A1) and~(A2). Our proof of~(A1) is based on Lyapunov functions and
makes use of the following general inequality on conditional distributions.

\begin{prop}
  \label{prop:mu_t}
  Fix $n\in\NN^r$ and let $\mu_t(\cdot)=\PP_n(X_t\in\cdot\mid t<\tau_\d)$. Let $V:\NN^r\rightarrow\RR_+$ such that $LV$ is bounded
  from above on $\NN^r$. Then, for all $t\geq 0$,
  \begin{equation}
    \label{eq:prop-mu_t}
    \mu_t(V)-V(n)\leq \int_0^t\Big[\mu_s(LV)-\mu_s(V)\mu_s(L\mathbbm{1}_{\NN^r})\Big]\,ds,    
  \end{equation}
  where the value of the integral in the r.h.s.\ is well-defined in $(-\infty,+\infty]$ since
  $$
  \int_0^t\Big[\mu_s(LV)-\mu_s(V)\mu_s(L\mathbbm{1}_{\NN^r})\Big]_-\,ds<\infty,
  $$
  where $[x]_-=(-x)\vee 0$ is the negative part of $x\in\RR$.
\end{prop}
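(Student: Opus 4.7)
The strategy is to establish the identity (with equality) first for bounded test functions via Kolmogorov's forward equation, then approximate $V$ by truncations $V_M := V\wedge M$ and pass to the limit, where only an inequality survives.

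Extend $V$ to $\ZZ_+^r$ by setting $V\equiv 0$ on $\d$, and write $P_t f(n) = \EE_n[f(X_t)]$ and $p(t) := \PP_n(t<\tau_\d) = P_t\mathbbm{1}_{\NN^r}(n)$, which is positive by irreducibility away from $\d$, so that $\mu_t(V) = P_tV(n)/p(t)$. For bounded $V_M$, Kolmogorov's forward equation (which holds since $X$ is non-explosive, as follows from a Lyapunov argument or by localization at $T_k := \inf\{t\geq 0:|X_t|>k\}$) gives $\tfrac{d}{dt}P_tV_M(n) = P_t(LV_M)(n)$ and $\tfrac{d}{dt}p(t) = P_t(L\mathbbm{1}_{\NN^r})(n)$; the quotient rule and integration then yield
$$
\mu_t(V_M) - V_M(n) = \int_0^t\bigl[\mu_s(LV_M) - \mu_s(V_M)\,\mu_s(L\mathbbm{1}_{\NN^r})\bigr]\,ds.
$$

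The passage $M\to\infty$ is the technical heart of the proof. The left-hand side converges to $\mu_t(V) - V(n)$ by monotone convergence. Since $L\mathbbm{1}_{\NN^r}(x) = -\lambda(x) \leq 0$ on $\NN^r$ (where $\lambda$ is the total absorption rate), the term $-\mu_s(V_M)\mu_s(L\mathbbm{1}_{\NN^r})$ is nonnegative and increases to $-\mu_s(V)\mu_s(L\mathbbm{1}_{\NN^r})$, so Tonelli--MCT handles its integral. The subtle piece is $\int_0^t\mu_s(LV_M)\,ds$. Here one establishes the uniform upper bound $LV_M \leq K := \sup LV$: if $V(n)\leq M$ then $V_M(n)=V(n)$ and $V_M(y)\leq V(y)$, giving $LV_M(n)\leq LV(n)\leq K$; if $V(n)>M$ then $V_M(n)=M\geq V_M(y)$ for every neighbour $y$, forcing $LV_M(n)\leq 0$. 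Since each $n$ has only finitely many neighbours under $L$, also $LV_M(n)\to LV(n)$ pointwise. Reverse Fatou's lemma applied twice (inner with respect to $\mu_s$, outer with respect to Lebesgue measure on $[0,t]$, each dominated by $K$) yields
$$
\limsup_{M\to\infty}\int_0^t\mu_s(LV_M)\,ds \leq \int_0^t\mu_s(LV)\,ds,
$$
and combining with the monotone piece gives the desired inequality.

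To check that the right-hand side is well-defined in $(-\infty,\infty]$, use Dynkin's inequality $\EE_n[V(X_t)] \leq V(n) + Kt$, obtained by applying Dynkin at $t\wedge T_k$ and letting $k\to\infty$ via Fatou. Since $V \equiv 0$ on $\d$ implies $\EE_n[LV(X_s)] = p(s)\mu_s(LV)$, this rewrites as $\int_0^t p(s)(\mu_s(LV))_-\,ds \leq V(n) + Kt + \int_0^t p(s)(\mu_s(LV))_+\,ds \leq V(n) + 2Kt$; since $p$ is non-increasing and $p(t)>0$, we conclude $\int_0^t(\mu_s(LV))_-\,ds<\infty$, and the negative part of the full integrand is dominated by $(\mu_s(LV))_-$ because $-\mu_s(V)\mu_s(L\mathbbm{1}_{\NN^r})\geq 0$. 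The principal obstacle is precisely this limit passage: since only an upper bound on $LV_M$ is available, dominated convergence fails on that term and one is forced into a reverse-Fatou argument, which explains why the conclusion is an inequality rather than an equality.
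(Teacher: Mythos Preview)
Your proof is correct and takes a genuinely different localization from the paper's. The paper localizes \emph{in space}: it stops the process at $\tau_k=\inf\{t:|X_t|\geq k\}$, applies Dynkin's formula to the stopped process $X^k$ (for which all rates are bounded), and lets $k\to\infty$; Fatou on $V\geq 0$ and reverse Fatou on $LV$ bounded above yield directly the unconditional inequality $\EE_n V(X_t)\leq V(n)+\int_0^t\EE_n[LV(X_s)]\,ds$, and the conditional version is then recovered by a quotient computation on the pair $\bigl(\EE_n V(X_t),\,\PP_n(t<\tau_\d)\bigr)$. You instead localize \emph{in the range of $V$} via $V_M=V\wedge M$, obtain an exact identity for $\mu_t(V_M)$ from the forward equation and the quotient rule, and only then let $M\to\infty$. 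Your route has the merit of producing an equality at the truncated level, so the single place where equality degenerates to inequality (the reverse-Fatou step on $\int_0^t\mu_s(LV_M)\,ds$) is cleanly isolated; the paper's route avoids having to justify the forward equation for $V_M$ against unbounded rates, at the price of a slightly less explicit final step.

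One small correction: your uniform bound $LV_M\leq K:=\sup LV$ can fail when $K<0$, since at points with $V(n)>M$ you only get $LV_M(n)\leq 0$. Replacing $K$ by $K_+:=K\vee 0$ throughout repairs this without touching the rest of the argument.
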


\begin{proof}
  Fix $k\in\NN$ and let $\tau_k:=\inf\{t\geq 0:|X_t|\geq k\}$. We define $X^k$ as the process $X$ stopped at time $\tau_k$, and
  denote by $L^k$ its infinitesimal generator, given by $L^k f(n)=Lf(n)\mathbbm{1}_{|n|<k}$. Dynkin's formula then entails
  $$
  \EE_n V(X^k_t)=V(n)+\int_0^t\EE_n\left[L^kV(X^k_s)\right]\,ds.
  $$
  Letting $k\rightarrow+\infty$, Fatou's lemma applied to both sides (mind that $V$ is bounded from below while $LV$ is bounded from
  above) imply that
  \begin{equation}
    \label{eq:prop1}
    \EE_n V(X_t)\leq V(n)+\int_0^t\EE_n\left[LV(X_s)\right]\,ds.
  \end{equation}
  Similarly,
  $$
  \EE_n\mathbbm{1}_{\NN^r}(X^k_t)=1+\int_0^t\EE_n\left[L^k\mathbbm{1}_{\NN^r}(X^k_s)\right]\,ds.
  $$
  Lebesgue's dominated convergence theorem applies to the l.h.s.\ and the monotone convergence theorem to the r.h.s., which entails
  $$
  \EE_n\mathbbm{1}_{\NN^r}(X_t)=1+\int_0^t\EE_n\left[L\mathbbm{1}_{\NN^r}(X_s)\right]\,ds.
  $$
  Now, fix $T\geq 0$.
  We have $1\geq \PP_n(t<\tau_\d)\geq \PP_n(T<\tau_\d)>0$ for all $t\in[0,T]$. Note first that, since $LV$ is bounded from above and
  $V$ is non-negative,~\eqref{eq:prop1} implies that $t\mapsto\EE_n[LV(X_t)]\in L^1([0,T])$. In addition, $L\mathbbm{1}_{\NN^r}\leq
  0$ and $V\geq 0$, so $-\EE_n(V(X_t))\EE_n(L\mathbbm{1}_{\NN^r}(X_t))\geq 0$. Then, either
  $$
  \int_0^t\EE_n(V(X_s))\EE_n(-L\mathbbm{1}_{\NN^r}(X_s))\,ds=+\infty,
  $$
  and then~\eqref{eq:prop-mu_t} is trivial since $\mu_t(f)=\EE_n(f(X_t))/\EE_n(\mathbbm{1}_{\NN^r}(X_t))\geq\EE_n(f(X_t))$ for all
  $f\geq 0$, or
  $$
  \int_0^t\EE_n(V(X_s))\EE_n(-L\mathbbm{1}_{\NN^r}(X_s))\,ds<+\infty.
  $$
  In this case, since $\mu_t(f)=\EE_n(f(X_t))/\EE_n(\mathbbm{1}_{\NN^r}(X_t))\leq\EE_n(f(X_t))/\PP_n(T<\tau_\d)$ for all $f\geq 0$,
  we deduce from the fundamental theorem of calculus that, for all $t\in[0,T]$,
  $$
  \int_0^t\Big[\mu_s(LV)-\mu_s(V)\mu_s(L\mathbbm{1}_{\NN^r})\Big]\,ds=\frac{V(n)+\int_0^t\EE_n\left[LV(X_s)\right]\,ds}
  {1+\int_0^t\EE_n\left[L\mathbbm{1}_{\NN^r}(X_s)\right]\,ds}-V(n).
  $$
  Inequality~\eqref{eq:prop-mu_t} then follows from~\eqref{eq:prop1}.
\end{proof}

\begin{proof}[Proof of Theorem~\ref{thm:one}]
The proof of~(A1) is based on the following bounded Lyapunov function: fix $\varepsilon\in(0,\gamma-\gamma\beta_2)$ and define for
all $n\in\NN^r$
$$
V_\varepsilon(n)=\sum_{j=1}^{|n|}\frac{1}{j^{1+\varepsilon}},
$$
and $V_\varepsilon(n)=0$ for all $n\in\d:=\ZZ_+^r\setminus\NN^r$. For all $m, n\in\NN^r$ such that $|m|\leq|n|$, we have in
particular the inequality
\begin{multline}
  \label{eq:bound-V}
  \frac{1}{\varepsilon}\left(\frac{1}{(|m|+1)^\varepsilon}-\frac{1}{(|n|+1)^\varepsilon}\right)=\int_{|m|+1}^{|n|+1}\frac{dx}{x^{1+\varepsilon}} \\
  \leq V_\varepsilon(n)-V_\varepsilon(m)
  \leq\int_{|m|}^{|n|}\frac{dx}{x^{1+\varepsilon}}=\frac{1}{\varepsilon}\left(\frac{1}{|m|^\varepsilon}-\frac{1}{|n|^\varepsilon}\right).
\end{multline}
\medskip

\noindent\textit{Step 1: Lyapunov function for the conditional distributions.}\\
Fix $n_0\in\NN^r$ and let $\mu_t(\cdot):=\PP_{n_0}(X_t\in\cdot\mid t<\tau_\d)$. It follows from~\eqref{eq:generator} that
\begin{multline}
 \mu_t(LV_\varepsilon)=\sum_{n\in\NN^r} \mu_t(n) \sum_{i=1}^r \left\{\mathbbm{1}_{n_i\neq 1}
 \left(\frac{n_ib_i(n)}{(|n|+1)^{1+\varepsilon}}-\frac{n_i\left[d_i(n)+\left(\sum_{j=1}^r c_{ij}(n)
         n_j\right)^\gamma\right]}{|n|^{1+\varepsilon}}\right) \right. \\
 \left.+\mathbbm{1}_{n_i=1}\left(\frac{b_i(n)}{(|n|+1)^{1+\varepsilon}}-\left[d_i(n)+\left(\sum_{j=1}^r c_{ij}(n)
       n_j\right)^\gamma\right]V_\varepsilon(n)\right)\right\} \label{eq:proof-1}
\end{multline}
Fix $n\in\NN^r$ and let $i^*(n)$ be (one of) the argmax of $i\mapsto n_i$ and let $c^*(n):=c_{i^*(n)i^*(n)}(n)$. Then $n_{i^*(n)}\geq |n|/r$ and
$$
n_{i^*(n)}\left(\sum_{j=1}^r c_{ij}(n) n_j\right)^\gamma\geq
\frac{c^*(n)^\gamma |n|^{1+\gamma}}{r^{1+\gamma}},
$$
Using Assumption~(H1) and since $d_i(n)\geq 0$,
\begin{multline}
  \mu_t(LV_\varepsilon)\leq\sum_{n\in\NN^r}\mu_t(n)\left\{\bar b
    |n|^{\beta_1-\varepsilon}-\mathbbm{1}_{n\neq(1,\ldots,1)}
    \frac{c^*(n)^\gamma |n|^{\gamma-\varepsilon}}{r^{1+\gamma}}
    \right. \\ \left.-\sum_{i=1}^r\mathbbm{1}_{n_i=1}\left[d_i(n)+\left(\sum_{j=1}^r c_{ij}(n) n_j\right)^\gamma\right]
    V_\varepsilon(n)\right\}. \label{eq:proof-1-bis}
\end{multline}
In addition,
$$
-\mu_t(V_\varepsilon)\mu_t(L\mathbbm{1}_{\NN^r})\leq\|V_\varepsilon\|_\infty\sum_{n\in\NN^r}\mu_t(n)
\sum_{i=1}^r\mathbbm{1}_{n_i=1}\left[d_i(n)+\left(\sum_{j=1}^r c_{ij}(n) n_j\right)^\gamma\right].
$$
Denoting $a:=\frac{c^*(1,\ldots,1)^\gamma}{r^{1+\varepsilon}} $, the last two equations imply
$$
\mu_t(LV_\varepsilon) -\mu_t(V_\varepsilon)\mu_t(L\mathbbm{1}_{\NN^r})\leq \sum_{n\in\NN^r}\mu_t(n)\left[\bar b
  |n|^{\beta_1-\varepsilon}+a-\frac{c^*(n)^\gamma
    |n|^{\gamma-\varepsilon}}{r^{1+\gamma}}\right]+A,
$$
where, by~\eqref{eq:bound-V},
\begin{align*}
  A & :=\sum_{n\in\NN^r}\mu_t(n)\sum_{i=1}^r\mathbbm{1}_{n_i=1}\left[d_i(n)+\left(\sum_{j=1}^r
      c_{ij}(n) n_j\right)^\gamma\right]\left(\|V_\varepsilon\|_\infty-V_\varepsilon(n)\right) \\ 
  & \leq\sum_{n\in\NN^r}\mu_t(n)\sum_{i=1}^r\mathbbm{1}_{n_i=1}\frac{\bar d
    |n|^{\beta_1}+\left(\sum_{j\neq i}c_{ij}(n)|n|+c_{ii}(n)\right)^\gamma}{\varepsilon|n|^\varepsilon} \\
  & \leq\sum_{n\in\NN^r}\mu_t(n)\left\{\frac{r\bar d |n|^{\beta_1-\varepsilon}}{\varepsilon}+
    \frac{C_{\gamma,r}|n|^{\gamma-\varepsilon}}{\varepsilon}\left(\sum_{1\leq i\neq j\leq r}
      c_{ij}(n)+\frac{1}{|n|}\sum_{i=1}^r c_{ii}(n)\right)^\gamma\right\},
\end{align*}
where $C_{\gamma,r}$ is a positive constant such that $x_1^\gamma+\ldots+x^\gamma_r\leq C_{\gamma,r}(x_1+\ldots+x_r)^\gamma$ for all $x_1,\ldots,x_r\geq 0$.

Using Assumption~(H2), we see that there exist constants $B>0$ and $C>1$ independent of $n$ such that
\begin{align*}
  \mu_t(LV_\varepsilon) -\mu_t(V_\varepsilon)\mu_t(L\mathbbm{1}_{\NN^r}) & \leq \sum_{n\in\NN^r}\mu_t(n)
  \left\{B(1+|n|^{\beta_1-\varepsilon})-\frac{c^*(n)^\gamma|n|^{\gamma-\varepsilon}}{2 r^{1+\gamma}}\right\} \\
  & \leq \sum_{n\in\NN^r}\mu_t(n)
  \left\{B(1+|n|^{\beta_1-\varepsilon})-\frac{\underline{c}^\gamma}{2 r^{1+\gamma}}|n|^{\gamma-\gamma\beta_2-\varepsilon}\right\} \\
  & \leq \left(C-\frac{1}{C}\sum_{n\in\NN^r}|n|^{\gamma-\gamma\beta_2-\varepsilon}\mu_t(n)\right),
\end{align*}
where the last inequality follows from the fact that $\beta_1<\gamma-\gamma\beta_2$ and $\varepsilon<\gamma-\gamma\beta_2$.
\medskip

\noindent\textit{Step 2: Proof of~(A1).}\\
Step~1 and Prop.~\ref{prop:mu_t} imply that
$$
\mu_t(V_\varepsilon)\leq \frac{1}{\varepsilon}+\int_0^t
\left(C-\frac{1}{C}\sum_{n\in\NN^r}|n|^{\gamma-\gamma\beta_2-\varepsilon}\mu_s(n)\right)\,ds.
$$
This implies that, for any initial condition $n_0$, there exists $s\leq 1/(\varepsilon C)$ such that
$$
\sum_{n\in\NN^r}|n|^{\gamma-\gamma\beta_2-\varepsilon}\mu_s(n)\leq 2C^2.
$$
Let us define the set $K= \{n\in\NN^r,\ |n|^{\gamma-\gamma\beta_2-\varepsilon}< 4C^2\}$, which is finite since $\varepsilon\in(0,\gamma-\gamma\beta_2)$.
 Using the previous inequality and Markov's inequality, we obtain that
$
\mu_s(K)\geq 1/2. 
$

The minimum $p=\min_{x\in K} \P_x(X_u=(1,\ldots,1),\ \forall u\in[1/\varepsilon C,2/\varepsilon C))$ is positive because $K$ is
finite, $X$ is irreducible away from $\d$ and the jumping rate from $(1,\ldots,1)$ is finite. Hence
\begin{align*}
\mu_s\Big(\P_\cdot(X_u=(1,\ldots,1),\ \forall u\in[1/\varepsilon C,2/\varepsilon C))\Big)\geq \frac{p}{2} >0,
\end{align*}
  Using the Markov property, we deduce that
\begin{align*}
\mu_{2/(\varepsilon C)}\{(1,\ldots,1)\}&\geq \mu_s(\P_\cdot(X_{2/(\varepsilon C)-s}=(1,\ldots,1))\\
&\geq \mu_s(\P_\cdot(X_u=(1,\ldots,1),\ \forall u\in[1/(\varepsilon C),2/\varepsilon C))\\
&\geq \frac{p}{2}>0.
\end{align*}
Since $p$ does not depend on the initial distribution of the process, we deduce that (A1) is satisfied with $\nu=\delta_{(1,\ldots,1)}$, $t_0=2/(\varepsilon C)$ and $c_1=p/2$.

\medskip
\noindent\textit{Step 3: Proof of~(A2).}\\
The same calculation as in~\eqref{eq:proof-1-bis} shows that, for all $n\in \NN^r$,
\begin{align*}
LV_{\varepsilon}(n)\leq \bar b
    |n|^{\beta_1-\varepsilon}-\mathbbm{1}_{n\neq(1,\ldots,1)}
    \frac{c^*(n)^\gamma |n|^{\gamma-\varepsilon}}{r^{1+\gamma}}.
\end{align*}
Now, using Assumption~(H1), we have $c^*(n)^\gamma |n|^{\gamma-\varepsilon}\geq \underline{c} |n|^{\gamma-\varepsilon-\gamma\beta_2}$, with
$\gamma-\varepsilon-\gamma\beta_2>(\beta_1-\varepsilon)\vee 0$. Hence, there exist two positive constants $C_1,C_2>0$ such that
\begin{align*}
LV_{\varepsilon}(n)\leq C_1-C_2|n|^{\gamma-\varepsilon-\gamma\beta_2},\ \forall n\in\N^r.
\end{align*}
Since $V_\varepsilon$ is bounded and $LV_\varepsilon$ is bounded from above, we deduce from  Dynkin's formula as in the proof of Proposition~\ref{prop:mu_t} that, for all $k\geq 1$,
\begin{align*}
\E_n\left(V_\varepsilon(X_{\tau_{\{|m|\leq k\}}\wedge\tau_\d})\right)&\leq V_\varepsilon(n)+\E\left(\int_0^{\tau_{\{|m|\leq k\}}\wedge\tau_\d}LV_{\varepsilon}(X_t)\,dt\right)\\
&\leq \frac{1}{\varepsilon}+\left(C_1-C_2|k|^{\gamma-\varepsilon-\gamma\beta_2}\right)\E_n\left(\tau_{\{|m|\leq k\}}\wedge\tau_\d\right),
\end{align*}
where $\tau_{\{|m|\leq k\}}$ is the first hitting time by $X_t$ of the set $\{m\in\NN^r:|m|\leq k\}$
Since
$V_\varepsilon(X_{\tau_{\{|m|\leq k\}}\wedge\tau_\d})$ is almost surely non-negative, we obtain
\begin{align*}
\sup_{n\in\NN^r}\E_n\left(\tau_{\{|m|\leq k\}}\wedge\tau_\d\right)\xrightarrow[k\rightarrow\infty]{} 0.
\end{align*}
Using the same argument as in~\cite[Eq.~(4.6)]{ChampagnatVillemonais2014}, we deduce that, for all $\lambda>0$, there exists $k\geq 1$ such that
\begin{equation}
  \label{eq:moment-expo}
  \sup_{n\in\N^r}\E_n(e^{\lambda \tau_{\{|m|\leq k\}}\wedge\tau_\d})<+\infty.
\end{equation}
Let us now denote by $\lambda$ the total jumping rate from $(1,\ldots,1)$. We choose $k$ such that~\eqref{eq:moment-expo} holds for
this constant $\lambda$. Defining the finite set $G=\{m\in\N^r \mid |m|\leq k\}$, we thus have
\begin{align}
\label{eq:expo-moment}
A:=\sup_{n\in\N^r}\E_n(e^{\lambda \tau_G\wedge \tau_\d})<\infty.
\end{align}
The irreducibility of $X$ and the finiteness of $G$ entail the existence of a constant $C>0$ such that
\begin{equation}
    \label{eq:ineq-PB-catastrophe}
    \sup_{n\in G}\P_n(t<\tau_\d)\leq C\inf_{n\in G}\P_n(t<\tau_\d),\quad\forall t\geq 0.
  \end{equation}
For all $n\in \N^r$, we deduce from Chebyshev's inequality and~\eqref{eq:expo-moment} that 
\begin{align}
\label{eq:domination}
\P_n(t<\tau_G\wedge \tau_\d)\leq Ae^{-\lambda t}.
\end{align}
%
 Using the last two inequalities and the strong Markov property, we have
\begin{align*}
    \P_n(t<\tau_\d) & =\P_n(t<\tau_G\wedge\tau_\d)+\P_n(\tau_G\wedge\tau_\d\leq t<\tau_\d)\\
&\leq Ae^{-\lambda t}+\int_0^t \sup_{m\in G\cup\{\d\}}\P_m(t-s<\tau_\d)\P_n(\tau_G\wedge\tau_\d\in ds)\\
&\leq Ae^{-\lambda t}+ C\int_0^t \P_{x_0}(t-s<\tau_\d)\P_n(\tau_G\wedge\tau_\d\in ds),
\end{align*}
where $x_0:=(1,\ldots,1)$.
Now, by definition of $\lambda$ and by the Markov property,
\begin{align*}
\P_{x_0}(t-s<\tau_\d)e^{-\lambda s}& =\P_{x_0}(t-s<\tau_\d)\P_{x_0}(X_u=x_0,\forall u\in [0,s])\\
&\leq \P_{x_0}(t<\tau_\d).
  \end{align*}
  Hence
\begin{align*}
\P_n(t<\tau_\d) &\leq Ae^{-\lambda t}+ C\P_{x_0}(t<\tau_\d)\int_0^t e^{\lambda s}\P_n(\tau_G\wedge\tau_\d\in ds).
\end{align*}  
We finally deduce that
\begin{align*}
\P_n(t<\tau_\d) &\leq A \,\P_{x_0}(t<\tau_\d)+CA\,\P_{x_0}(t<\tau_\d).
\end{align*}
This entails (A2) for $\nu=\delta_{x_0}=\delta_{(1,\ldots,1)}$. 
\end{proof}

\section{Birth and death processes with neutral competition in not too large dimension}
\label{sec:two}

The second case we study corresponds to the following assumptions.

\begin{hyp}
  \label{hyp2}
  \begin{description}
  \item[\textmd{(H3)}] There exist constants $\bar b$, $\bar d$ in $(0,\infty)$ and $\beta_1\in[0,\gamma)$ such
    that, for all $n\in\NN^r$ and $i\in\{1,\ldots,r\}$,
    $$
    0<b_i(n)\leq \bar b|n|^{\beta_1},\quad 0\leq d_i(n)\leq\bar d|n|^{\beta_1}.
    $$
  \item[\textmd{(H4)}] There exists a constant $c>0$ such that, for all $i,j\in\{1,\ldots,r\}$, $c_{ij}(n)=c$ (\emph{neutral competition}).
  \end{description}
\end{hyp}


\begin{thm}
  \label{thm:two}
  Under Assumptions~\ref{hyp2} and if $r< 1+e\gamma$, there exist constants $C,\lambda>0$ such that~\eqref{eq:conv} holds true.
\end{thm}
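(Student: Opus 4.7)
As in the proof of Theorem~\ref{thm:one}, I would verify conditions (A1) and (A2) from \cite[Thm.\,2.1]{ChampagnatVillemonais2014}. For (A2), the strategy transfers almost verbatim: one chooses a bounded Lyapunov function $V$ such that $LV\leq C_1-C_2|n|^{\gamma-\varepsilon}$ for some $\varepsilon\in(0,\gamma-\beta_1)$, derives exponential moments for the hitting time of a compact set $G$ via Dynkin's formula as in~\cite[Eq.~(4.6)]{ChampagnatVillemonais2014}, and combines this with the strong Markov property and irreducibility on $G$ to obtain $\PP_n(t<\tau_\d)\leq C\,\PP_{(1,\ldots,1)}(t<\tau_\d)$ for all $n\in\NN^r$. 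The assumption $\beta_1<\gamma$ built into~(H3) is exactly what makes the competition term $c^\gamma|n|^\gamma$ dominate and thereby enables this estimate in the neutral case.

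The main difficulty, and the likely origin of the constraint $r<1+e\gamma$, lies in~(A1). The Lyapunov function $V_\varepsilon$ used in Theorem~\ref{thm:one} no longer suffices: under neutral competition, Assumption~(H2) fails and the killing contribution $A$ coming from states with at least one coordinate equal to $1$ grows like $r\,c^\gamma|n|^{\gamma-\varepsilon}/\varepsilon$, of the same order as the negative drift $c^\gamma|n|^{\gamma-\varepsilon}/r^{1+\gamma}$ obtained from the argmax bound. I would therefore design a refined bounded Lyapunov function $V$ exploiting two ingredients. First, the symmetry of the neutral case gives the exact identity $\sum_i n_i(\sum_j c_{ij}n_j)^\gamma=c^\gamma|n|^{1+\gamma}$, which saves the factor $r^{1+\gamma}$ lost in Theorem~\ref{thm:one} when bounding by the single term $i=i^*(n)$. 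Second, one needs the tail of $V$ (that is, $\|V\|_\infty-V(n)$) to decrease faster than the $1/(\varepsilon|n|^\varepsilon)$ provided by $V_\varepsilon$, for example through a Lyapunov function with geometric or factorial decay of its increments, so that the bad term $A$ becomes asymptotically negligible compared to the (now sharper) drift.

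The precise threshold $r<1+e\gamma$ should emerge from an optimization over parameters of this Lyapunov function; the presence of Euler's number is reminiscent of bounds of the form $\sup_{\alpha>0}\alpha\,e^{-\alpha(r-1)}=1/(e(r-1))$ and suggests parametrizing $V$ by a scale $\alpha$ to be optimized at the end. Once the inequality
$$
\mu_t(LV)-\mu_t(V)\mu_t(L\mathbbm{1}_{\NN^r})\leq C-C'\mu_t(\varphi)
$$
is established for some $\varphi$ with $\varphi(n)\to+\infty$ as $|n|\to\infty$, Proposition~\ref{prop:mu_t} combined with Markov's inequality yields concentration of $\mu_s$ on a finite set $K$ at some $s\leq 1/(\varepsilon C)$, and the argument concludes exactly as in Theorem~\ref{thm:one}: irreducibility away from $\d$ and the finiteness of the jumping rate at $(1,\ldots,1)$ give (A1) with $\nu=\delta_{(1,\ldots,1)}$. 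I expect the main obstacle to be precisely the design of $V$ and the careful tracking of constants so that the resulting optimization produces the sharp threshold $r<1+e\gamma$ rather than a strictly weaker condition.
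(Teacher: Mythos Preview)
Your diagnosis of the structure is largely correct: (A2) carries over as in Theorem~\ref{thm:one}, the neutral identity $\sum_i n_i\bigl(\sum_j c_{ij}n_j\bigr)^\gamma=c^\gamma|n|^{1+\gamma}$ removes the $r^{1+\gamma}$ loss, and the obstruction lies entirely in controlling the term $A$. But your proposed fix---replacing $V_\varepsilon$ by a Lyapunov function with geometric or factorial tail decay---is not what the paper does, and it is not clear it would work: faster-decaying increments shrink the favourable drift term at the same rate as they shrink $\|V\|_\infty-V(n)$, so the two effects cancel and one does not obviously gain anything.

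The paper keeps the \emph{same} function $V_\varepsilon$ and instead changes the way $A$ is bounded. The key move you are missing is to \emph{not} replace $\mu_t(V_\varepsilon)$ by $\|V_\varepsilon\|_\infty$ in $-\mu_t(V_\varepsilon)\mu_t(L\mathbbm{1}_{\NN^r})$. Keeping it exact makes $A$ involve the centred quantity $(\mu_t(V_\varepsilon)-V_\varepsilon(n))_+$ rather than the tail $\|V_\varepsilon\|_\infty-V_\varepsilon(n)$. One then observes that for $n\neq(1,\ldots,1)$ at most $r-1$ coordinates equal $1$, and bounds $(r-1)\sum_n\mu_t(n)|n|^\gamma(\mu_t(V_\varepsilon)-V_\varepsilon(n))_+$ by a purely analytic argument: write $W_\varepsilon=\|V_\varepsilon\|_\infty-V_\varepsilon$, use~\eqref{eq:bound-V} to get $|n|^\gamma\lesssim W_\varepsilon(n)^{-\gamma/\varepsilon}$, apply the scalar inequality $x^{(\gamma-\varepsilon)/\varepsilon}(1-x)\leq\frac{\varepsilon}{\gamma}\bigl(1-\frac{\varepsilon}{\gamma}\bigr)^{(\gamma-\varepsilon)/\varepsilon}$, and close with H\"older. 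This yields a factor $\frac{1}{\gamma}(1-\varepsilon/\gamma)^{\gamma/\varepsilon-1}\to\frac{1}{e\gamma}$ as $\varepsilon\to 0$, and the condition $(r-1)\cdot\frac{1}{e\gamma}<1$ is exactly $r<1+e\gamma$. So the threshold comes from an optimization over $\varepsilon$ in the original $V_\varepsilon$, not from a new Lyapunov function.
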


\begin{rem}
  In the classical logistic case $\gamma=1$, this gives the existence and exponential convergence in total variation to the
  quasi-stationary distribution up to dimension 3. The larger $\gamma$ is, the larger the dimension $r$ can be taken.
\end{rem}

\begin{proof}[Proof of Conditions~(A1) and~(A2) under Assumptions~\ref{hyp2}]

We proceed as in the proof of Theorem~\ref{thm:one}. Fix $\varepsilon\in(0,\gamma)$. First, we deduce from~\eqref{eq:proof-1} and from the fact that $n_i\mathbbm{1}_{n_i\neq 1}\geq n_i-1$ that
\begin{align*}
  \mu_t(LV_\varepsilon) & \leq\sum_{n\in\NN^r}\mu_t(n)\left\{\bar b
    |n|^{\beta_1-\varepsilon}-c^\gamma\sum_{i=1}^r(n_i-1)|n|^{\gamma-1-\varepsilon}\right. \\
  & \qquad\qquad\qquad \left. -\sum_{i=1}^r\mathbbm{1}_{n_i=1}\left[d_i(n)+c^\gamma|n|^\gamma\right]
    V_\varepsilon(n)\right\} \\
  & \leq\sum_{n\in\NN^r}\mu_t(n)\left\{\bar b
    |n|^{\beta_1-\varepsilon}+\frac{rc^\gamma}{|n|^{1+\varepsilon-\gamma}}-c^\gamma|n|^{\gamma-\varepsilon} \right. \\
  & \qquad\qquad\qquad \left. -\sum_{i=1}^r\mathbbm{1}_{n_i=1}\left[d_i(n)+c^\gamma|n|^\gamma\right]
    V_\varepsilon(n)\right\}.
\end{align*}

In addition,
$$
-\mu_t(V_\varepsilon)\mu_t(L\mathbbm{1}_{\NN^r})=\mu_t(V_\varepsilon)\sum_{n\in\NN^r}\mu_t(n)
\sum_{i=1}^r\mathbbm{1}_{n_i=1}\left[d_i(n)+c^\gamma|n|^\gamma\right].
$$
Hence,
\begin{equation}
  \label{eq:proof-v2}
  \mu_t(LV_\varepsilon) -\mu_t(V_\varepsilon)\mu_t(L\mathbbm{1}_{\NN^r})\leq\sum_{n\in\NN^r}\mu_t(n)\left[\bar b
    |n|^{\beta_1-\varepsilon}+\frac{r c^\gamma}{|n|^{1+\varepsilon-\gamma}}-c^\gamma|n|^{\gamma-\varepsilon}\right]+A,
\end{equation}
 where
\begin{align}
  A & := \sum_{n\in\NN^r}\mu_t(n)\sum_{i=1}^r\mathbbm{1}_{n_i=1}\left[d_i(n)+c^\gamma|n|^\gamma\right]
    \left(\mu_t(V_\varepsilon)-V_\varepsilon(n)\right) \notag \\
    & \leq\sum_{n\in\NN^r}\mu_t(n)\biggl[\bar d r|n|^{\beta_1}(\|V_\varepsilon\|_\infty-V_\varepsilon(n)) +c^\gamma
      r^{1+\gamma}\|V_\varepsilon\|_\infty \notag \\ &\qquad\qquad\qquad\left.+c^\gamma|n|^\gamma(r-1)
    \left(\mu_t(V_\varepsilon)-V_\varepsilon(n)\right)_+\right], \label{eq:proof-v2-2}
\end{align}
where the last two terms are obtained by distinguishing between the cases where $n=(1,\ldots,1)$ (and hence $|n|=r$) and
$n\neq(1,\ldots,1)$ (and hence $\sum\mathbbm{1}_{n_i=1}\leq r-1$). Now, defining
$W_\varepsilon(n):=\|V_\varepsilon\|_\infty-V_\varepsilon(n)$, it follows from~\eqref{eq:bound-V} that
\begin{multline*}
  \sum_{n\in\NN^r}\mu_t(n)|n|^\gamma \left(\mu_t(V_\varepsilon)-V_\varepsilon(n)\right)_+ \\
    \leq\frac{1}{\varepsilon^{\gamma/\varepsilon}}\sum_{n\in\NN^r}\mu_t(n)W_\varepsilon(n)^{-\gamma/\varepsilon}
  \left(W_\varepsilon(n)-\mu_t(W_\varepsilon)\right)_+\\
  \leq\frac{1}{\varepsilon^{\gamma/\varepsilon}}\sum_{n\in\NN^r}\mu_t(n)W_\varepsilon(n)^{1-\gamma/\varepsilon}
  \left(1-\frac{\mu_t(W_\varepsilon)}{W_\varepsilon(n)}\right)_+.
\end{multline*}
Defining
$$
w_\varepsilon(n):=\frac{\mu_t(W_\varepsilon)}{W_\varepsilon(n)},\quad\forall n\in\NN^r,
$$
we obtain
$$
\sum_{n\in\NN^r}\mu_t(n)|n|^\gamma \left(\mu_t(V_\varepsilon)-V_\varepsilon(n)\right)_+\leq
\frac{\sum_{n\in\NN^r}\mu_t(n)
w_\varepsilon(n)^{\gamma/\varepsilon-1}(1-w_\varepsilon(n))_+}{\varepsilon^{\gamma/\varepsilon}\,\mu_t(W_\varepsilon)^{\gamma/\varepsilon-1}}.
$$
Now, it is elementary to check that
$$
x^{\frac{\gamma-\varepsilon}{\varepsilon}}(1-x)\leq\frac{\varepsilon}{\gamma}\left(\frac{\gamma-\varepsilon}{\gamma}\right)^{\frac{\gamma-\varepsilon}{\varepsilon}},\quad
\forall x\geq 0
$$
and H\"older's inequality implies that
$$
1\leq\mu_t(W_\varepsilon)^{1-\varepsilon/\gamma}\mu_t\left(W_\varepsilon^{-\frac{\gamma-\varepsilon}{\varepsilon}}\right)^{\varepsilon/\gamma}.
$$
Hence, it follows from~\eqref{eq:bound-V} that
\begin{align*}
  \sum_{n\in\NN^r}\mu_t(n)|n|^\gamma \left(\mu_t(V_\varepsilon)-V_\varepsilon(n)\right)_+ & \leq
  \frac{\mu_t\left(W_\varepsilon^{-\frac{\gamma-\varepsilon}{\varepsilon}}\right)}{\gamma\varepsilon^{\gamma/\varepsilon-1}}
  \left(\frac{\gamma-\varepsilon}{\gamma}\right)^{\frac{\gamma-\varepsilon}{\varepsilon}} \\
  & \leq\frac{1}{\gamma}\left(1-\frac{\varepsilon}{\gamma}\right)^{\frac{\gamma}{\varepsilon}-1}
  \sum_{n\in\NN^r}(|n|+1)^{\gamma-\varepsilon}\mu_t(n).
\end{align*}
Since
$$
\lim_{x\rightarrow 0}\left(1-x\right)^{\frac{1}{x}-1}=\frac{1}{e},
$$
under the assumption that $r<1+e\gamma$, we can find $\varepsilon>0$ small enough such that, for some $\delta>0$,
$$
(r-1)\sum_{n\in\NN^r}\mu_t(n)|n|^\gamma \left(\mu_t(V_\varepsilon)-V_\varepsilon(n)\right)_+
\leq (1-\delta) \sum_{n\in\NN^r}(|n|+1)^{\gamma-\varepsilon}\mu_t(n).
$$
Combining this with~\eqref{eq:proof-v2} and~\eqref{eq:proof-v2-2}, since
$\beta_1-\varepsilon<\gamma-\varepsilon$ and $\varepsilon<\gamma$, there exists a constant $C>0$ such that
\begin{align*}
  \mu_t(LV_\varepsilon) -\mu_t(V_\varepsilon)\mu_t(L\mathbbm{1}_{\NN^r}) & \leq\sum_{n\in\NN^r}\mu_t(n)\left[\left(\bar b+\frac{\bar d
        r}{\varepsilon}\right)|n|^{\beta_1-\varepsilon}+c^\gamma|n|^{\gamma-\varepsilon-1}\right. \\
  & \left.+\frac{c^\gamma r^{1+\gamma}}{\varepsilon}
    -c^\gamma|n|^{\gamma-\varepsilon}+c^\gamma(1-\delta)(|n|+1)^{\gamma-\varepsilon}\right] \\
  & \leq C-\frac{c^\gamma\delta}{2}\sum_{n\in\NN^r}|n|^{\gamma-\varepsilon}\,\mu_t(n).
\end{align*}
The rest of the proof is the same as for Thm.~\ref{thm:one}.

\end{proof}

\section{A few extensions to other models}
\label{sec:ext}
The method that we used is based on a Lyapunov type argument to prove conditions~(A1) and~(A2). This method is general enough to apply to a wide range of other models. We give here a few simple examples for which the exponential convergence of conditional distributions can be proved following the same arguments.

\bigskip\noindent\textbf{One dimensional birth and death processes with catastrophes.} We consider a standard birth an death process
on $\Z_+$ with birth (resp.\ death) rate $b_n$ (resp.\ $d_n$) from state $n\in\Z_+$, with $d_0=b_0=0$ and $b_n,d_n>0$ for all
$n\in\N$. This process is absorbed at $\d =0$. Moreover, we add a catastrophe rate $a_n\geq 0$ of jump from any state $n\in\N$ to the
absorption point $\d$. Such models have been studied in~\cite{vanDoorn2012,ChampagnatVillemonais2014} with an assumption of uniformly
bounded catastrophe rate, which we relax here.

In this example, we restrict for simplicity to the logistic cases, where there exist constants $\bar{b}>0$, $\underline{c}>0 $ and $\delta\in(0,1)$ such that, for $n$ large enough,
\begin{align}
\label{eq:cata}
b_n\leq n\bar{b},\quad d_n\geq n^2\underline{c}\quad\text{and}\quad a_n \leq \delta \underline{c}n.
\end{align}
This simple situation allows a similar computation as in Section~\ref{sec:one} (with $\gamma=1$ and $\beta_1=\beta_2=0$). Of course, the arguments can be easily adapted to any other cases where explicit
Lyapunov functions are known.

\begin{prop}
\label{prop:ex1}
Assume that~\eqref{eq:cata} holds for $n$ sufficiently large, then there exist constants $C,\lambda>0$ such that~\eqref{eq:conv} holds true.
\end{prop}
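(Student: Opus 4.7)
The plan is to verify the two conditions (A1) and (A2) of the general criterion of \cite[Thm.\,2.1]{ChampagnatVillemonais2014}, reproducing the Lyapunov strategy of the proof of Theorem~\ref{thm:one} in the one-dimensional logistic setting with the same bounded Lyapunov function $V_\varepsilon(n)=\sum_{j=1}^n j^{-1-\varepsilon}$ for $n\geq 1$, $V_\varepsilon(0)=0$. The main novelty is that the generator now has an extra jump term $a_n[f(0)-f(n)]$, which must be controlled both in the computation of $LV_\varepsilon$ and in the renormalization term involving $L\mathbbm{1}_{\NN}$.

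Concretely, for $n\geq 2$ we have $LV_\varepsilon(n)=b_n/(n+1)^{1+\varepsilon}-d_n/n^{1+\varepsilon}-a_n V_\varepsilon(n)$, and $L\mathbbm{1}_{\NN}(n)=-a_n-d_n\mathbbm{1}_{n=1}$, so the renormalization produces the extra positive contribution $-\mu_t(V_\varepsilon)\mu_t(L\mathbbm{1}_{\NN})\leq \|V_\varepsilon\|_\infty\,\mu_t(a_n+d_n\mathbbm{1}_{n=1})$. Grouping the catastrophe terms gives the net contribution $a_n(\|V_\varepsilon\|_\infty-V_\varepsilon(n))$, which by the estimate $\|V_\varepsilon\|_\infty-V_\varepsilon(n)\leq 1/(\varepsilon n^\varepsilon)$ (a one-sided version of \eqref{eq:bound-V}) is bounded by $(\delta\underline{c}/\varepsilon)\,n^{1-\varepsilon}$ under \eqref{eq:cata}. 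The death term provides a loss of $d_n/n^{1+\varepsilon}\geq\underline{c}\,n^{1-\varepsilon}$ for large $n$. Hence, choosing $\varepsilon\in(\delta,1)$ creates strict dominance, and after absorbing $n=1$ contributions and the bounded birth term into a constant, I obtain
\begin{equation*}
\mu_t(LV_\varepsilon)-\mu_t(V_\varepsilon)\mu_t(L\mathbbm{1}_{\NN})\leq C-C'\sum_{n\in\NN} n^{1-\varepsilon}\,\mu_t(n)
\end{equation*}
for some $C,C'>0$. Plugging this into Proposition~\ref{prop:mu_t} and proceeding exactly as in Step~2 of the proof of Theorem~\ref{thm:one} yields (A1) with $\nu=\delta_1$, since $\{1,\ldots,N\}$ is finite and $X$ is irreducible on $\NN$.

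For (A2), the identical computation applied pointwise (instead of to $\mu_t$) gives $LV_\varepsilon(n)\leq C_1-C_2 n^{1-\varepsilon}$ for all $n\in\NN$, since the catastrophe term $-a_n V_\varepsilon(n)$ is nonpositive and needs no compensation at the pointwise level. From here I would run \emph{verbatim} the argument of Step~3 of Theorem~\ref{thm:one}: Dynkin's formula yields the uniform bound $\sup_n\E_n(\tau_{\{m\leq k\}}\wedge\tau_\d)\to 0$ as $k\to\infty$, then by \cite[Eq.~(4.6)]{ChampagnatVillemonais2014} a uniform exponential moment for $\tau_G\wedge\tau_\d$ with $G=\{1,\ldots,k\}$, and finally the Markov-property estimate using the total jumping rate $\lambda=b_1+d_1+a_1$ from the state $x_0=1$. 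This delivers (A2) with $\nu=\delta_1$.

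The main (and only) obstacle is the catastrophe contribution $a_n(\|V_\varepsilon\|_\infty-V_\varepsilon(n))$ in the evaluation of $\mu_t(LV_\varepsilon)-\mu_t(V_\varepsilon)\mu_t(L\mathbbm{1}_{\NN})$. Without a quantitative comparison between $a_n$ and $d_n$, this term could destroy the Lyapunov drift; it is precisely the assumption $a_n\leq \delta\underline{c}\,n$ with $\delta<1$ that leaves room to pick $\varepsilon\in(\delta,1)$ and close the estimate. Everything else is a routine transcription of the multi-dimensional argument to the one-dimensional setting with $r=1$, $\gamma=1$, and $\beta_1=\beta_2=0$.
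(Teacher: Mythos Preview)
Your proposal is correct and follows essentially the same approach as the paper's own proof: the same bounded Lyapunov function $V_\varepsilon$, the same grouping of the catastrophe terms into $a_n(\|V_\varepsilon\|_\infty-V_\varepsilon(n))\leq(\delta\underline{c}/\varepsilon)n^{1-\varepsilon}$, and the same choice $\varepsilon\in(\delta,1)$ to make the death term dominate. The paper's version is terser (it simply writes ``we can conclude as in Section~\ref{sec:one}'' after obtaining the Lyapunov inequality), whereas you spell out the (A1) and (A2) steps explicitly, but the argument is identical.
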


\begin{proof}
We use the same Lyapunov function $V_\varepsilon$ as in Section~\ref{sec:one} and we obtain
\begin{align*}
\mu_t(LV_\varepsilon)&=\sum_{n\in\N} \mu_t(n)\left(\frac{b_n}{(n+1)^{1+\varepsilon}}-\frac{c_n}{n^{1+\varepsilon}}-a_n V_\varepsilon(n)\right)\\
&\leq \sum_{n\in\N} \mu_t(n)\left(\bar{b}n^{-\varepsilon}-\underline{c}n^{1-\varepsilon}-a_n V_\varepsilon(n)\right).
\end{align*}
In addition
\begin{align*}
-\mu_t(V_\varepsilon)\mu_t(L\mathbbm{1}_\N)\leq \|V_\varepsilon\|_\infty \sum_{n\in\N} \mu_t(n)\left(\mathbbm{1}_{n=1} c_1+a_n\right).
\end{align*}
Hence there exists a constant $C>0$ such that
\begin{align*}
\mu_t(LV_\varepsilon)-\mu_t(V_\varepsilon)\mu_t(L\mathbbm{1}_\N)&\leq C-\underline{c}\sum_{n\in\N} \mu_t(n) n^{1-\varepsilon}+\sum_{n\in\N} \mu_t(n) a_n \left(\|V_\varepsilon\|_\infty-V_\varepsilon(n)\right)\\
&\leq C-\underline{c}\sum_{n\in\N} \mu_t(n) n^{1-\varepsilon}\left(1-\frac{\delta}{\varepsilon}\right).
\end{align*}
Taking $\varepsilon\in(\delta,1)$, we can conclude as in Section~\ref{sec:one}.
\end{proof}

\bigskip\noindent\textbf{Multi-dimensional birth and death processes with catastrophes.}
We now study the multi-dimensional case with catastrophes, for which, as far as we know, no result on quasi-stationary distributions is known. We consider the same model as in Section~\ref{sec:one} with an additional jump rate $a(n)\geq 0$ from any state $n\in\N^r$ to $\d$. The next result can be proved by an easy combination of the arguments of the proofs of Theorem~\ref{thm:one} and Proposition~\ref{prop:ex1}.
\begin{prop}
Under Assumption~\ref{hyp1} and the assumption
\begin{align*}
a(n)\ll c_{ii}(n) |n|^\gamma,
\end{align*}
there exist constants $C,\lambda>0$ such that~\eqref{eq:conv} holds true.
\end{prop}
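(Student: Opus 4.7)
The plan is to verify that the Markov process $(X_t)_{t\geq 0}$ with catastrophe rate $a(n)$ satisfies conditions~(A1) and~(A2) of~\cite[Thm.\,2.1]{ChampagnatVillemonais2014}, by combining the Lyapunov strategy of Theorem~\ref{thm:one} with the handling of the killing term illustrated in the proof of Proposition~\ref{prop:ex1}. Denote by $L^a$ the infinitesimal generator of the process with catastrophes; for any bounded function $f$ on $\ZZ_+^r$ with $f(\d)=0$, one has $L^a f(n)=Lf(n)-a(n) f(n)$ for $n\in\NN^r$, where $L$ is defined by~\eqref{eq:generator}. Since $V_\varepsilon$ is bounded and nonnegative and $a(n)\geq 0$, $L^a V_\varepsilon$ is still bounded from above, so Proposition~\ref{prop:mu_t} applies to $L^a$ without modification.

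For condition~(A1), I would use the same bounded Lyapunov function $V_\varepsilon(n)=\sum_{j=1}^{|n|} j^{-1-\varepsilon}$ with $\varepsilon\in(0,\gamma-\gamma\beta_2)$ and reproduce Step~1 of the proof of Theorem~\ref{thm:one}. Compared with that calculation, $L^a V_\varepsilon$ contains the extra term $-a(n)V_\varepsilon(n)$ and $L^a \mathbbm{1}_{\NN^r}$ contains the extra term $-a(n)$; combined, the quantity $\mu_t(L^a V_\varepsilon)-\mu_t(V_\varepsilon)\mu_t(L^a \mathbbm{1}_{\NN^r})$ acquires the single additional contribution
\[
\sum_{n\in\NN^r} \mu_t(n)\, a(n)\, \bigl(\mu_t(V_\varepsilon)-V_\varepsilon(n)\bigr) \leq \sum_{n\in\NN^r}\mu_t(n)\,\frac{a(n)}{\varepsilon|n|^\varepsilon},
\]
where I used $\|V_\varepsilon\|_\infty-V_\varepsilon(n)\leq 1/(\varepsilon|n|^\varepsilon)$ from~\eqref{eq:bound-V}. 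Under the assumption $a(n)\ll c_{ii}(n)|n|^\gamma$ together with~(H1), this remainder is asymptotically dominated by the main negative term of order $|n|^{\gamma-\gamma\beta_2-\varepsilon}$ produced in Step~1 of the proof of Theorem~\ref{thm:one}, so the bound
\[
\mu_t(L^a V_\varepsilon)-\mu_t(V_\varepsilon)\mu_t(L^a\mathbbm{1}_{\NN^r}) \leq C - \frac{1}{C}\sum_{n\in\NN^r}|n|^{\gamma-\gamma\beta_2-\varepsilon}\mu_t(n)
\]
holds with a possibly enlarged constant $C>0$. Step~2 of the proof of Theorem~\ref{thm:one} then carries over verbatim, giving~(A1) with $\nu=\delta_{(1,\ldots,1)}$.

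For condition~(A2), the additional killing term $-a(n)V_\varepsilon(n)\leq 0$ only improves the pointwise upper bound
\[
L^a V_\varepsilon(n) \leq C_1 - C_2 |n|^{\gamma-\varepsilon-\gamma\beta_2},\quad n\in\NN^r,
\]
so the Dynkin argument leading to the exponential moment estimate~\eqref{eq:moment-expo} works without change. The remainder of Step~3 of Theorem~\ref{thm:one}, including the irreducibility bound~\eqref{eq:ineq-PB-catastrophe} on the finite set $G$ and the strong Markov decomposition comparing $\P_n(t<\tau_\d)$ with $\P_{(1,\ldots,1)}(t<\tau_\d)$, is unaffected by the presence of catastrophe jumps, since these do not alter the interior transition structure of $X$ and only add a bounded killing rate on the finite set $G$.

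The main (and essentially only) point requiring care is verifying that the precise form of the assumption $a(n)\ll c_{ii}(n)|n|^\gamma$ truly absorbs the catastrophe remainder $a(n)/(\varepsilon|n|^\varepsilon)$ into the principal negative contribution of order $|n|^{\gamma-\gamma\beta_2-\varepsilon}$; this amounts to combining the hypothesis with the lower bound $c_{ii}(n)\geq\underline{c}|n|^{-\beta_2}$ from~(H1) and choosing $\varepsilon$ small enough. Once this comparison is recorded, nothing further is needed and the conclusion follows along the exact path of Theorem~\ref{thm:one}.
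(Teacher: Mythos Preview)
Your proposal is correct and follows exactly the approach the paper indicates: the paper does not spell out a proof but simply remarks that the result is obtained by an easy combination of the arguments of Theorem~\ref{thm:one} and Proposition~\ref{prop:ex1}, and this is precisely what you do---carrying over Steps~1--3 of Theorem~\ref{thm:one} and inserting the catastrophe remainder $\sum_n\mu_t(n)\,a(n)\bigl(\mu_t(V_\varepsilon)-V_\varepsilon(n)\bigr)\leq \sum_n\mu_t(n)\,a(n)/(\varepsilon|n|^\varepsilon)$ exactly as in Proposition~\ref{prop:ex1}. One minor remark: the absorption of this remainder is most cleanly justified by comparing it to the negative term $c^*(n)^\gamma|n|^{\gamma-\varepsilon}$ before invoking the lower bound from~(H1), rather than after, since the hypothesis $a(n)\ll c_{ii}(n)|n|^\gamma$ is stated in terms of $c_{ii}(n)$ itself and not of a power of $|n|$.
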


\bigskip\noindent\textbf{Multi-dimensional birth and death processes with multiple births.}
We consider the same model as in Section~\ref{sec:one} with an additional feature: we allow multiple progeny at each birth time. To do so we consider, for all $n\in\N^r$, a probability measure
\begin{align*}
p_n:=\sum_{k\in\Z_+^r} p_{n,k}\delta_k.
\end{align*}
Then, when a birth occurs (at rate $b(n)$) in a population $n$, the new state of the population is $n+k$ with probability $p_{n,k}$.
A one-dimensional case has already been studied in~\cite{champagnat-claisse-15}.

\begin{prop}
Under Assumption~\ref{hyp1} and the assumption
\begin{align*}
M:=\sup_{n\in\N^r} \sum_{k\in\Z_+^r} |k|\,p_{n,k}\ <\ \infty,
\end{align*}
there exist constants $C,\lambda>0$ such that~\eqref{eq:conv} holds true.
\end{prop}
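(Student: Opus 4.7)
The plan is to follow the proof of Theorem~\ref{thm:one} essentially verbatim, using the same bounded Lyapunov function $V_\varepsilon(n)=\sum_{j=1}^{|n|}j^{-1-\varepsilon}$ with $\varepsilon\in(0,\gamma-\gamma\beta_2)$, and to verify that the hypothesis $M<\infty$ is exactly what preserves the birth contribution to $\mu_t(LV_\varepsilon)$ at the same order as in the single-birth case. Since only the upward (birth) transitions have been modified, the analysis of the downward (death and competition) terms and of the boundary term $A$, which comes from death events at states with $n_i=1$, goes through unchanged.

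\medskip

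The key new estimate is obtained by applying~\eqref{eq:bound-V} with $m$ replaced by $n$ and $n$ replaced by $n+k$: for all $n\in\NN^r$ and $k\in\ZZ_+^r$,
\[
V_\varepsilon(n+k)-V_\varepsilon(n) \leq \frac{1}{\varepsilon}\left(\frac{1}{|n|^\varepsilon}-\frac{1}{(|n|+|k|)^\varepsilon}\right) \leq \frac{|k|}{|n|^{1+\varepsilon}},
\]
where the last inequality follows by writing the middle difference as $\varepsilon\int_{|n|}^{|n|+|k|}x^{-1-\varepsilon}dx$. Using the bound $b(n)\leq \bar b|n|^{1+\beta_1}$ on the total birth rate derived from Assumption~(H1), the total birth contribution to $\mu_t(LV_\varepsilon)$ in the multiple-birth model is controlled by
\[
\sum_{n\in\NN^r}\mu_t(n)\,\bar b|n|^{1+\beta_1}\sum_{k\in\ZZ_+^r}p_{n,k}\frac{|k|}{|n|^{1+\varepsilon}} \leq \bar b\,M\sum_{n\in\NN^r}\mu_t(n)\,|n|^{\beta_1-\varepsilon},
\]
which matches the birth bound from Theorem~\ref{thm:one} up to the harmless multiplicative constant $M$.

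\medskip

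With this single modification absorbed into the constants, Step~1 of the proof of Theorem~\ref{thm:one} still delivers
\[
\mu_t(LV_\varepsilon)-\mu_t(V_\varepsilon)\mu_t(L\mathbbm{1}_{\NN^r}) \leq C-\frac{1}{C}\sum_{n\in\NN^r}|n|^{\gamma-\gamma\beta_2-\varepsilon}\,\mu_t(n),
\]
so that Proposition~\ref{prop:mu_t} yields~(A1) with $\nu=\delta_{(1,\ldots,1)}$ by the same Markov-property argument as in Step~2. For~(A2), the pointwise Lyapunov bound $LV_\varepsilon(n)\leq C_1-C_2|n|^{\gamma-\varepsilon-\gamma\beta_2}$ is preserved ($M$ being absorbed in $C_1$), the total jump rate from $(1,\ldots,1)$ remains finite since $p_{(1,\ldots,1),\cdot}$ is a probability measure on $\ZZ_+^r$, and the accessibility of $(1,\ldots,1)$ from every state of the finite set $G$ appearing in Step~3 is preserved because single-particle deaths are unchanged, so $(1,\ldots,1)$ can be reached from any $n\in G$ by a finite sequence of one-by-one deaths. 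The main point to watch out for, rather than a genuine obstacle, is that only the first moment $M$ of $p_{n,k}$ is required, which is the sharpest assumption compatible with the linear growth of $V_\varepsilon(n+k)-V_\varepsilon(n)$ in $|k|$; any weaker moment assumption would break the matching of the birth term with the competition drift.
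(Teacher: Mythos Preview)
Your proof is correct and follows essentially the same approach as the paper: both isolate the birth term as the only modification, bound $V_\varepsilon(n+k)-V_\varepsilon(n)=\sum_{j=|n|+1}^{|n|+|k|}j^{-1-\varepsilon}\leq |k|/|n|^{1+\varepsilon}$, and absorb the first moment $M$ into the constant in front of $|n|^{\beta_1-\varepsilon}$. Your added remarks on the finiteness of the jump rate at $(1,\ldots,1)$ and on reaching $(1,\ldots,1)$ via single-particle deaths are useful checks that the paper leaves implicit.
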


\begin{proof}
  The only term which differs from the proof of Theorem~\ref{thm:one} is the birth term in $\mu_t(LV_\varepsilon)$, given by
  \begin{multline*}
    \sum_{n\in\NN^r}\mu_t(n)\sum_{i=1}^rn_ib_i(n)\sum_{k\in\ZZ_+^r}p_{n,k}\sum_{j=|n|+1}^{|n|+|k|}\frac{1}{j^{1+\varepsilon}} \\
    \begin{aligned}
      & \leq\sum_{n\in\NN^r}\mu_t(n)\sum_{i=1}^rn_ib_i(n)\sum_{k\in\ZZ_+^r}\frac{|k|}{|n|^{1+\varepsilon}}p_{n,k} \\
      & \leq M\sum_{n\in\NN^r}\mu_t(n)\bar b |n|^{\beta_1-\varepsilon}.
    \end{aligned}
  \end{multline*}
  Thus, we obtain a similar bound as in Section~\ref{sec:one} for this term, and the proof can be completed as there.
\end{proof}

\bibliographystyle{abbrv}
\bibliography{biblio-bio,biblio-denis,biblio-math,biblio-math-nicolas}

\end{document}